
\documentclass{amsproc}
\usepackage{amssymb}


\theoremstyle{plain}
 \newtheorem{theorem}{Theorem}[section]
 \newtheorem{proposition}{Proposition}[section]
 \newtheorem{lemma}{Lemma}[section]
 
\theoremstyle{definition}
 
 \newtheorem{definition}{Definition}[section]
\theoremstyle{remark}
 
 \numberwithin{equation}{section}

\renewcommand{\leq}{\leqslant}
\renewcommand{\geq}{\geqslant}

\setlength{\textwidth}{28cc} \setlength{\textheight}{42cc}

\title[Scheduling Asynchronous Round-Robin Tournaments]{Scheduling Asynchronous Round-Robin Tournaments}



\keywords{timetabling, sports scheduling, round-robin tournament, fairness}

\author[Warut Suksompong]{Warut Suksompong}

\address{
Department of Computer Science \\ 
Stanford University   \\ 
Stanford, CA 94305\\
USA}
\email{warut@cs.stanford.edu}




\begin{document}

\vspace{18mm} \setcounter{page}{1} \thispagestyle{empty}

\begin{abstract}
We study the problem of scheduling asynchronous round-robin tournaments. We consider three measures of a schedule that concern the quality and fairness of a tournament. We show that the schedule generated by the well-known ``circle design'' performs well with respect to all three measures when the number of teams is even, but not when the number of teams is odd. We propose a different schedule that performs optimally with respect to all measures when the number of teams is odd.
\end{abstract}

\maketitle

\section{Introduction}

A round-robin tournament, also known as an all-play-all tournament, is a popular format for organizing sports competitions. In a round-robin tournament, every pair of teams play each other a fixed number of times during the competition. Since every team competes with every other team, the winner of a round-robin tournament is usually thought to depend much less on luck than that of, say, a knockout tournament. A series of work has investigated how to schedule a round-robin tournament when different notions are central to the organizers' consideration. One line of research has focused on time-relaxed tournaments, which takes into account the issue of time off between games involving the same team \cite{Knust08,Knust10,SchonbergerMaKo04}, while another has considered fairness issues \cite{Briskorn09,BriskornKn10,VanthofPoBr10,ZengMi13}. We refer the interested reader to a survey by Rasmussen and Trick \cite{RasmussenTr08} and a book by Anderson \cite{Anderson97} for more detail on the literature.

In this paper, we study the problem of scheduling asynchronous round-robin tournaments, i.e., round-robin tournaments in which no two games take place at the same time. There are a number of reasons why it might be desirable to schedule all games at different times. Indeed, this tournament format allows spectators to follow all the games live, and the organizers can maximize revenue while having to organize the same number of games. Tournaments may even need to be asynchronous if there is only one venue where a game can take place. An example of an asynchronous round-robin tournament is the 2012 Premier League Snooker in England, in which five players in the group stage play a total of ten games in ten different weeks (albeit in ten different venues as well).

When scheduling an asynchronous round-robin tournament, the organizers may desire properties that improve the quality and fairness of the tournament. Unlike in knockout tournaments, where the organizers can significantly impact the outcome of the tournament by setting up a bracket of their choice (see, e.g., \cite{KimSuVa16,Vassilevska10}), the set of games to be played in a round-robin tournament cannot be changed. Nevertheless, the order in which the games are played can still be an important factor in a round-robin tournament. For example, when teams have a longer rest between games, they are more likely to have a relaxing rest and perform at their full potential in the next game. On the other hand, if some team has a long rest going into a game while its opponent has just played its previous game, the former team could be at a clear advantage. Another desirable property of a schedule is that at any point during the tournament, all teams should have played roughly the same number of games. This prevents the advantage of knowing too many results involving other teams and the possibility of collusion as well. We define measures that capture all of these properties, and we exhibit schedules that perform (close to) optimally with regard to our measures. In particular, we show that the schedule generated by the well-known ``circle design'' performs well with respect to all three measures when the number of teams is even, but not so well when the number of teams is odd. We also propose a different schedule that performs optimally with respect to all three measures when the number of teams is odd. We hope that this schedule will be of practical interest to organizers of asynchronous round-robin tournaments.

A related problem that is worth mentioning is the problem of finding balanced tournament designs, which has been considered by some prior work \cite{BlestFi88,HaselgroveLe77,SchellenbergVa77}. In the setting of balanced tournament designs, it is assumed that there exist external factors that make some games different from others, and it is desirable that teams receive roughly the same effect from these external factors. For instance, the tournament might involve games during different times of the day or at different venues. Since some teams might be more familiar with playing in the morning than in the evening or with playing at one venue than another, the aim of a balanced tournament design is to eliminate or minimize the potential advantage by scheduling teams to play as evenly across the different times and venues as possible. On the other hand, in our setting there is no inherent difference between games. Indeed, a good example to keep in mind throughout this paper is that the games in the tournament are scheduled on consecutive days, one game per day, at a single venue.

A summary of our results can be found in Table \ref{table:summary}.

\begin{table}
    \begin{tabular}{| p{35mm} | p{23mm} | p{21mm} | p{23mm} | p{21mm} |}
    \hline
     & Circle method, $n$ even & Any schedule, $n$ even & Circle method, $n$ odd & Any schedule, $n$ odd \\ \hline
     Guaranteed rest time & $(n-4)/2$ & $\leq (n-4)/2$ & $(n-5)/2$ & $\leq (n-3)/2$ \\ \hline
     Games-played difference index & 1 & $\geq 1$ & 2 & $\geq 1$ \\ \hline
     Rest difference index & 1 if $n=4$; 2 if $n\geq 6$ & $\geq 1$ & $(n+1)/2$ & $\geq 1$ \\
    \hline
    \end{tabular}
    \vspace{5mm}
    \caption{Summary of our results. All bounds are known to be attainable except that for the rest difference index when $n$ is even. See also Section \ref{sec:discussion} for further discussion.}
    \label{table:summary}
\end{table}


\section{Preliminaries}
\label{sec:prelim}

We assume that the tournament in consideration is a single round-robin tournament, i.e., every pair of teams play each other exactly once. As we will mention in Section \ref{sec:discussion}, however, several of our results can be generalized to arbitrary round-robin tournaments as well.

Let $n$ denote the number of teams in the tournament. We divide the games in the tournament into $r$ rounds of $g$ games, where the first round comprises the first $g$ games, the second round the next $g$ games, and so on. The parameters $r$ and $g$ depend on $n$ and are given by
\[g=\left\lfloor{\frac n2}\right\rfloor,\]
\[
r = 2\cdot\left\lceil{\frac n2}\right\rceil-1 = 
\begin{cases} 
n &\mbox{if } n \mbox{ is odd;} \\ 
n-1 & \mbox{if } n \mbox{ is even.} 
\end{cases} 
\]
A team is said to play in slot $i$ in a round if it plays the $i$th game of that round. We emphasize that in asynchronous tournaments, rounds do not carry any particular meaning in the implementation of the tournament and are defined merely for the sake of convenience of our analysis.

A single round-robin tournament consists of $\binom{n}{2}=\frac{n(n-1)}{2}$ games. Each team plays $n-1$ games, and we have the identity
\[r\cdot g = \left(2\cdot\left\lceil{\frac n2}\right\rceil-1\right)\cdot\left\lfloor{\frac n2}\right\rfloor = \frac{n(n-1)}{2}.\]

A well-known method for scheduling a round-robin tournament, described for instance by Haselgrove and Leech \cite{HaselgroveLe77}, is called the \textit{circle design}. The method works as follows. Assume first that $n$ is even. We arrange the teams into two rows of $n/2$ teams in such a way that the two rows align team by team. The games in the first round correspond to the pairs of teams that are aligned in this arrangement. For asynchronous tournaments, we read the games from left to right. To generate the games in the next round, we keep the top-left team fixed and rotate the remaining teams one step counterclockwise. (It is also possible to rotate the remaining teams one step \textit{clockwise}, but this results in the same schedule as rotating counterclockwise under appropriate renaming of the teams.) We perform the rotation $n-2$ times to generate the games in all $n-1$ rounds. If $n$ is odd, we simply pretend that the top-left team is a dummy team, and whichever team is matched to that team ``sits out'' the round (i.e., gets a bye in that round). The first three rounds for the tournaments with $n=10$ and $n=11$ are shown in Figures \ref{fig:circledesign10} and \ref{fig:circledesign11}, respectively.

\begin{figure}
\centering
\begin{tabular}{ccccc}
1 & 2 & 3 & 4 & 5 \\
10 & 9 & 8 & 7 & 6 \\
\end{tabular}
\hspace{7mm}
\begin{tabular}{ccccc}
1 & 10 & 2 & 3 & 4 \\
9 & 8 & 7 & 6 & 5 \\
\end{tabular}
\hspace{7mm}
\begin{tabular}{ccccc}
1 & 9 & 10 & 2 & 3 \\
8 & 7 & 6 & 5 & 4 \\
\end{tabular}
\caption{The first three rounds generated by the circle design for a tournament with $n=10$.}
\label{fig:circledesign10}
\end{figure}

\begin{figure}
\centering
\begin{tabular}{ccccc}
1 & 2 & 3 & 4 & 5 \\
10 & 9 & 8 & 7 & 6 \\
\end{tabular}
\hspace{7mm}
\begin{tabular}{ccccc}
11 & 1 & 2 & 3 & 4 \\
9 & 8 & 7 & 6 & 5 \\
\end{tabular}
\hspace{7mm}
\begin{tabular}{ccccc}
10 & 11 & 1 & 2 & 3 \\
8 & 7 & 6 & 5 & 4 \\
\end{tabular}
\caption{The first three rounds generated by the circle design for a tournament with $n=11$. Note that one team ``sits out'' each round (i.e., gets a bye in that round).}
\label{fig:circledesign11}
\end{figure}

We now define three measures of a schedule for an asynchronous tournament that concern the quality and fairness of the tournament. The first measure, guaranteed rest time, considers the minimum amount of time that the schedule allows teams to take a rest before their next game.

\begin{definition}
The \textit{guaranteed rest time} of a schedule for an asynchronous tournament is the maximum integer $b$ such that in the schedule, any two games involving a team is separated by at least $b$ games not involving that team.
\end{definition}

A schedule with a high guaranteed rest time is desirable, as it allows teams to take a long rest and prepare themselves for the next game. The higher the guaranteed rest time, the more likely we will see teams perform at their full potential in the tournament.

The next two measures, the games-played difference index and the rest difference index, reflect the fairness of the schedule.

\begin{definition}
The \textit{games-played difference index} of a schedule for an asynchronous tournament is the minimum integer $p$ such that at any point in the schedule, the difference between the number of games played by any two teams is at most $p$.
\end{definition}

It is evident that for any tournament with at least three teams, the games-played difference index is at least $1$. A schedule with a low games-played difference index ensures that all teams have played roughly the same number of games at any point during the tournament. This prevents the advantage that some teams may have if they know the results of too many games involving other teams. Indeed, with this knowledge the teams can adjust their strategy to achieve their desired position in the tournament and may even conspire with one another to do so. 

\begin{definition}
The \textit{rest difference index} of a schedule for an asynchronous tournament is the minimum integer $d$ such that for any game in the schedule, if one team has not played in $i_1$ consecutive games since its last game and the other team has not played in $i_2$ consecutive games since its last game, then $|i_1-i_2|\leq d$. (To handle the situation in which a team is playing its first game in the tournament, we will assume that all teams are involved in an imaginary game that takes place one slot before the first game of the schedule.)
\end{definition}

It is again evident that for any tournament with at least three teams, the rest difference index is at least $1$. A schedule with a low rest difference index guarantees that the two teams involved in a game have approximately the same amount of rest time going into the game.


\section{Even number of teams}
\label{sec:evennum}

In this section, we assume that the tournament in consideration consists of an even number of teams. We will show that under this assumption, the schedule generated by the circle design fares extremely well with respect to all of the measures introduced in Section \ref{sec:prelim}. Since the round-robin tournament with two teams consists of a single game, we will only consider $n\geq 4$. 

We begin by showing that an upper bound on the guaranteed rest time.

\begin{proposition}
\label{prop:evenresttime}
Let $n\geq 4$. Any schedule for a tournament with $n=2k$ teams has guaranteed rest time at most $k-2$.
\end{proposition}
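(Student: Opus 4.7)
The plan is to reduce the statement to a short combinatorial argument about overlapping windows of consecutive games.

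First, I would translate the definition of guaranteed rest time into a local window property: if the guaranteed rest time is $b$, then in any $b+1$ consecutive games of the schedule, each team plays at most once. (If two games of the same team fell inside such a window, they would be at most $b$ positions apart, meaning fewer than $b$ games separate them, contradicting the definition.) Counting player-slots immediately gives $2(b+1)\le n = 2k$, hence $b \le k-1$. So the only remaining task is to rule out the case $b = k-1$.

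Suppose for contradiction that $b = k-1$. Then every window of $k$ consecutive games contains $2k$ player-slots filled by teams that are pairwise distinct; since there are exactly $2k$ teams, each of the $n$ teams appears \emph{exactly} once in every such window. I would then compare two overlapping windows: games $1,2,\dots,k$ and games $2,3,\dots,k+1$. Let the two teams playing in game $1$ be $A$ and $B$. Neither $A$ nor $B$ plays in games $2,\dots,k$ (they already appear in game $1$ of the first window). But $A$ and $B$ must each appear in the second window, and the only game of that window where they have not been excluded is game $k+1$. Therefore both $A$ and $B$ play in game $k+1$, so game $1$ and game $k+1$ feature the identical matchup $\{A,B\}$, contradicting the fact that in a single round-robin tournament each pair plays exactly once. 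This yields $b \le k-2$, which is the desired bound.

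I do not expect any real obstacle; the argument is short and the main idea is simply to notice that the extremal case $b=k-1$ forces every window of length $k$ to be a perfect transversal of the team set, and then to exploit a pair of overlapping such windows to force a repeated matchup. One thing to be mindful of while writing the proof is to state clearly the equivalence between the rest-time condition and the per-window uniqueness condition, since the rest of the argument depends on that reformulation.
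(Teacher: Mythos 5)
Your proposal is correct and follows essentially the same route as the paper: the paper likewise observes that a guaranteed rest time of $k-1$ forces each team to play exactly once in every block of $k$ consecutive games, and then concludes that the $(k+1)$st game would have to repeat the matchup of the first game. Your write-up just makes the window-overlap step more explicit.
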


\begin{proof}
Assume for the sake of contradiction that the guaranteed rest time is at least $k-1$. This means that all teams play at most once in any $k$ consecutive games. Since there are $2k$ teams, each team plays exactly once in the first round. Hence the first game in the second round must involve the same two teams as the first game in the first round, a contradiction.
\end{proof}

Next, we analyze the schedule generated by the circle design.

\begin{proposition}
\label{prop:evencircle}
Let $n\geq 4$. The schedule generated by the circle design for a tournament with $n=2k$ teams has guaranteed rest time $k-2$, games-played difference index $1$, and rest difference index $2$ if $n\geq 6$ and $1$ if $n=4$.
\end{proposition}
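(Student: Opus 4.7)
The plan is to analyze each of the three measures in turn, using a common parameterization of the circle design. Writing $n=2k$, label the $2k-1$ rotating positions by $p_0, \ldots, p_{2k-2}$ in the order teams cycle through them: $p_0, \ldots, p_{k-2}$ are the top-row slots $2, \ldots, k$, and $p_{k-1}, \ldots, p_{2k-2}$ are the bottom-row slots $k, k-1, \ldots, 1$. Team $1$ always occupies top-row slot $1$, while team $t\geq 2$ occupies $p_{(t-2+j)\bmod(2k-1)}$ in round $j$. Let $\sigma(i)$ denote the slot corresponding to position $p_i$, so $\sigma(i)=i+2$ for $0\leq i\leq k-2$ and $\sigma(i)=2k-1-i$ for $k-1\leq i\leq 2k-2$. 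A team playing in slots $s_j$ and $s_{j+1}$ of two consecutive rounds has exactly $(k-s_j)+(s_{j+1}-1)=k-1+s_{j+1}-s_j$ games between its appearances, so all three analyses reduce to tracking the slot changes under one step of rotation.

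For the guaranteed rest time, the upper bound $k-2$ is immediate from Proposition~\ref{prop:evenresttime}. For the lower bound I would check the four kinds of one-step rotations (top-to-top, $p_{k-2}\to p_{k-1}$, bottom-to-bottom, and $p_{2k-2}\to p_0$) using $\sigma$ and verify that $s_{j+1}-s_j\in\{-1,0,1\}$, with the minimum value $-1$ attained precisely on bottom-to-bottom transitions; team $1$'s constant slot contributes a gap of $k-1$. This yields exactly $k-2$ intermediate games in the worst case, matching the upper bound. The games-played difference index equals $1$ because the circle design has every team play exactly once per round; hence at any moment of round $j$ each team has played $j-1$ or $j$ games, so the difference is at most $1$, and the remark after the definition supplies the matching lower bound.

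For the rest difference index I would first dispose of round $0$: by the imaginary-game convention, both teams in the game at slot $s$ of round $0$ have rest $s-1$, so the difference is $0$. For round $j\geq 1$ and slot $s\geq 2$, the teams currently at $p_{s-2}$ and $p_{2k-1-s}$ were at $p_{(s-3)\bmod(2k-1)}$ and $p_{2k-2-s}$ in round $j-1$; at slot $s=1$, team $1$ (with rest $k-1$) is paired with the team at $p_{2k-2}$, which was at $p_{2k-3}$ in the previous round. Using the formula $\text{rest}=k-1+s-\sigma(\text{prev})$, a short case analysis over $s=1,\ldots,k$ shows that for $k\geq 3$ the top team has rest $k$ for every $s\geq 2$, the bottom team has rest $k-2$ for $2\leq s\leq k-1$, and the rests differ by only $1$ at the boundary slots $s=1$ and $s=k$; the maximum difference is thus $2$. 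For $n=4$ the schedule contains only six games and a direct enumeration confirms that the index is $1$. The main technical obstacle will be handling the wrap-around boundaries of the rotation cycle ($p_{k-2}\to p_{k-1}$, $p_{2k-2}\to p_0$, and team $1$'s special role) cleanly inside a single case analysis, so that the rest formulas apply uniformly across all slots and rounds.
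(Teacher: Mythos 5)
Your proposal is correct and follows essentially the same route as the paper: both rest on the observation that under the circle rotation each team's slot changes by at most one between consecutive rounds (decreasing by one exactly on bottom-row-to-bottom-row moves), which yields the guaranteed rest time $k-2$, the games-played difference index $1$, and the rest difference bound of $2$, with the lower bounds witnessed by explicit slots. Your coordinates $p_0,\dots,p_{2k-2}$ and the formula $\mathrm{rest}=k-1+s_{j+1}-s_j$ simply make the paper's terse slot-shift argument fully quantitative.
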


\begin{proof}
We verify each of the measures separately.

\begin{itemize}
\item \textit{Guaranteed rest time}: Note that each team plays exactly once in every round. Since the slot of a team is shifted by at most $1$ from one round to the next, and each round consists of $k$ slots, the team has a rest of at least $k-2$ games. On the other hand, a team whose slot is shifted to the left has a rest of exactly $k-2$ games.

\item \textit{Games-played difference index}: Since each team plays exactly once in every round, all of the teams have played the same number of games at the end of each round. It follows that the index is $1$.

\item \textit{Rest difference index}: One can directly verify that the index is $1$ if $n=4$. Assume now that $n\geq 6$, and consider the second slot in the second round. One of the teams in that slot is shifted from the third slot in the first round and the other team from the first slot in the first round. Hence the index is at least $2$. On the other hand, the slot of a team is shifted by at most $1$ from one round to the next, so the index is exactly $2$.
\end{itemize}

Thus we have verified all of the measures.
\end{proof}

Propositions \ref{prop:evenresttime} and \ref{prop:evencircle} together imply that the schedule generated by the circle design has an optimal guaranteed rest time and an optimal games-played difference index. Moreover, the rest difference index can be improved by at most $1$. We now show that unless $n=4$, it is impossible to simultaneously obtain a guaranteed rest time of $k-2$ and games-played difference and rest difference indices of $1$.

\begin{theorem}
\label{thm:evennooptimal}
Let $n\geq 6$. No schedule for a tournament with $n=2k$ teams has guaranteed rest time $k-2$, games-played difference index $1$, and rest difference index $1$.
\end{theorem}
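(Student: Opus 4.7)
The plan is to argue by contradiction: assume a schedule exists with guaranteed rest time $k-2$, games-played index $1$, and rest difference index $1$, and derive a very rigid structure on the first two slots of every round that is incompatible with single round-robin play.

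I would begin by translating the three assumptions into local constraints on slot assignments. Writing $\sigma_r(T)\in\{1,\dots,k\}$ for the slot of team $T$ in round $r$, the games-played index $1$ forces every team to play exactly once per round, so $\sigma_r$ is a surjective $2$-to-$1$ function onto the $k$ slots. The guaranteed rest time of $k-2$ gives $\sigma_{r+1}(T)\geq\sigma_r(T)-1$, and the rest difference index of $1$ gives $|\sigma_r(T_1)-\sigma_r(T_2)|\leq 1$ whenever $T_1,T_2$ share a slot in round $r+1$ (for $r\geq 1$, which uses the imaginary-game convention to make the first round vacuous).

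The main step would be to establish the following structural lemma: for every $r\in\{1,\dots,2k-2\}$, slots $1$ and $2$ of round $r+1$ contain exactly the four teams that filled slots $1$ and $2$ of round $r$. Both teams in slot $1$ of round $r+1$ must have $\sigma_r\in\{1,2\}$ and differ by at most $1$; ruling out the two ``both-from-the-same-slot'' options (which would replay a round-$r$ pair) leaves one team from each of slots $1$ and $2$ of round $r$. Labeling the chosen pair $A_1,A_2$ and the remaining two teams $B_1,B_2$, the partner of $B_1$ in round $r+1$ must have $\sigma_r\in\{1,2\}$, forcing it to be $B_2$. It remains to show $B_1,B_2$ share slot $2$: if they instead shared some slot $s'\geq 3$, then slot $2$ of round $r+1$ would have to be filled by two teams with $\sigma_r\leq 3$, all of whom except the two teams of slot $3$ of round $r$ are already placed, forcing slot $2$ to be a replay of the round-$r$ slot-$3$ pair (this step uses $k\geq 3$).

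Iterating the lemma shows that the same four-element set $S$ occupies slots $1$ and $2$ in every round of the tournament. Consequently, each team in $S$ plays an opponent from $S$ in all $2k-1$ rounds. But in a single round-robin each team meets each other team only once, so a team in $S$ has only $3$ opponents within $S$, giving $2k-1\leq 3$ and hence $n\leq 4$, contradicting the assumption $n\geq 6$. The main obstacle will be the delicate slot-$2$ argument inside the structural lemma: the rest-time and rest-difference constraints together leave some flexibility for where $\{B_1,B_2\}$ can land, and pinning it down requires a pigeonhole on the remaining available teams for slot $2$ of round $r+1$, leveraging $k\geq 3$ in an essential way.
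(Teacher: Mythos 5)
Your proposal is correct and follows essentially the same route as the paper: both arguments first show that every team plays exactly once per round, then that the occupants of the first two slots of a round are forced (by the rest-time and rest-difference constraints, plus the no-replay condition) to be the same four teams as in the previous round, so that the three possible pairings of those four teams are exhausted by the fourth round. The paper carries this out concretely for named teams through rounds 1--4, whereas you package the forcing step as an invariant valid for every round and finish with a count of matchings; the combinatorial content is identical.
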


\begin{proof}
Assume for the sake of contradiction that such a schedule exists. We first claim that in the schedule, each team plays exactly once in every round. This can be shown by induction on the number of rounds. Suppose that each team plays exactly once in every round up to round $i\geq 0$. In round $i+1$, if some team plays twice, then some other team does not play at all, contradicting the fact that the games-played difference index is $1$. Hence each team also plays exactly once in round $i+1$, completing the induction.

Suppose now that in the first round, teams 1 and 2 play in the first game, teams 3 and 4 in the second, and teams 5 and 6 in the third. Since the guaranteed rest time is $k-2$, the first game in the second round can only involve teams from the first two games in the first round. Assume without loss of generality that team 1 plays team 3 in that game. Similarly, the second game in the second round can only involve teams from the first three games in the first round. The game cannot be played between team 4 and one of teams 5 and 6, since the game involving team 2 in the second round would violate the rest difference index condition. Hence the game is played between teams 2 and 4.

By the same reasoning, the first game in the third round must be played by teams 1 and 4, and the second by teams 2 and 3. But then no team can play against team 1 in the fourth round without violating the rest difference index condition. Thus we have the desired contradiction.
\end{proof}

Theorem \ref{thm:evennooptimal} implies that if a schedule were to have rest difference index $1$, it would have to sacrifice either the guaranteed rest time or the games-played difference index. Nevertheless, it is interesting to ask whether there exists for all even $n$ a schedule with rest difference index $1$. For $n=6$, two such schedules are shown in Figure \ref{fig:restdifference6}. The first schedule also has an optimal guaranteed rest time of $1$, but makes the necessary sacrifice by having a games-played difference index of $2$. On the other hand, the second schedule is worse off in both measures, having guaranteed rest time $0$ and games-played difference index $3$.

\begin{figure}
\centering
\begin{tabular}{*{19}{c}}
1 & 3 & 1 && 2 & 1 & 4 && 1 & 2 & 5 && 1 & 2 & 3 && 2 & 3 & 4 \\
2 & 4 & 5 && 6 & 3 & 5 && 6 & 3 & 6 && 4 & 5 & 6 && 4 & 5 & 6 \\ [3mm]
1 & 3 & 5 && 1 & 1 & 3 && 1 & 2 & 1 && 2 & 3 & 2 && 2 & 4 & 4 \\
2 & 4 & 6 && 3 & 5 & 6 && 6 & 4 & 4 && 6 & 5 & 3 && 5 & 6 & 5 \\
\end{tabular}
\caption{Two schedules with rest difference index $1$ for a tournament with $n=6$.}
\label{fig:restdifference6}
\end{figure}


\section{Odd number of teams}
\label{sec:oddnum}

In this section, we assume that the tournament in consideration consists of an odd number of teams. We will show that unlike in the case with an even number of teams, the schedule generated by the circle design does not fare so well with respect to the measures introduced in Section \ref{sec:prelim}. Nevertheless, we will exhibit a different schedule that performs optimally with respect to all of the measures.

The round-robin tournament with three teams consists of three games, and any two schedules of the three games are equivalent under renaming of the teams, so we have no choice to make in this case. 

We begin by showing an upper bound of $k-1$ for the guaranteed rest time of any schedule. 

\begin{proposition}
\label{prop:oddbound}
Let $n\geq 3$. Any schedule for a tournament with $n=2k+1$ teams has guaranteed rest time at most $k-1$.
\end{proposition}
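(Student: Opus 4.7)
The plan is to use a straightforward counting argument on a window of consecutive games, in the same spirit as Proposition~\ref{prop:evenresttime}. Suppose for contradiction that the guaranteed rest time is at least $k$. By definition this means that whenever a team plays in games at positions $p_1 < p_2$, we have $p_2-p_1-1 \geq k$, i.e., $p_2-p_1 \geq k+1$. Equivalently, in any window of $k+1$ consecutive games, every team plays at most once.

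Next I would exhibit such a window and double-count team appearances. Since the total number of games is $k(2k+1)$, a window of $k+1$ consecutive games exists whenever $k \geq 1$. This window contains $k+1$ games, each involving two teams, so it contributes $2(k+1) = 2k+2$ team-slots. On the other hand, since each of the $n = 2k+1$ teams appears at most once in the window, the total number of team-slots is at most $2k+1$. Combining these gives $2k+2 \leq 2k+1$, which is the desired contradiction.

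I do not anticipate any genuine obstacle: the argument is essentially a pigeonhole step, with the only mild subtlety being the off-by-one bookkeeping between ``separated by at least $k$ games'' and ``positions differing by at least $k+1$'' that dictates choosing a window of length $k+1$ rather than $k$. The bound $k-1$ will later be shown to be tight by an explicit construction, so no further strengthening is needed here.
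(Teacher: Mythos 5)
Your proof is correct and is essentially the paper's argument: both apply the pigeonhole principle to a block of $k+1$ consecutive games, noting that $2k+2$ team-slots cannot be filled by $2k+1$ teams each appearing at most once. The only cosmetic difference is that you phrase it as a contradiction over an arbitrary window while the paper argues directly on the first $k+1$ games.
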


\begin{proof}
Consider the first $k+1$ games of the tournament. Since they involve the participation of $2k+2$ teams (counting multiplicity), the pigeonhole principle implies that some team plays at least twice among those games. Such a team has a rest of at most $k-1$ games.
\end{proof}

Next, we analyze the schedule generated by the circle design. Even though this schedule does not match the bound in Proposition \ref{prop:oddbound}, we will later exhibit a different schedule that does attain the upper bound.

\begin{proposition}
Let $n\geq 5$. The schedule generated by the circle design for a tournament with $n=2k+1$ teams has guaranteed rest time $k-2$, games-played difference index $2$, and rest difference index $k+1$.
\end{proposition}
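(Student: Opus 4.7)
The plan is to realize the circle design on $n=2k+1$ real teams as the even-case circle design on $2k+2$ teams (real teams plus one phantom) with the phantom's game deleted in each round. Under this viewpoint the real teams traverse a single cyclic sequence of $2k+1$ positions,
\[
(\mathrm{top},2),(\mathrm{top},3),\dots,(\mathrm{top},k{+}1),(\mathrm{bottom},k{+}1),\dots,(\mathrm{bottom},2),(\mathrm{bottom},1),
\]
advancing one step per round; positions $(\mathrm{top},j)$ and $(\mathrm{bottom},j)$ correspond to slot $j-1$ of the actual schedule, and $(\mathrm{bottom},1)$ corresponds to a bye. The sequence of slots a single team visits thus cyclically reads $1,2,\dots,k,k,k-1,\dots,2,1,\ast$, where $\ast$ denotes the bye round.

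For the guaranteed rest time, between two consecutive played rounds the slot changes by at most $1$, producing rest $(k-s)+(s'-1)\in\{k-2,k-1,k\}$, while across the bye the rest is $(k-1)+k+0=2k-1$. The minimum $k-2$ is attained on each descending step of the cycle, establishing the value $k-2$. For the games-played difference index, each team has exactly one bye among the $n$ rounds, so at the end of round $i$ every team has played either $i-1$ or $i$ games. During round $i+1$ a team's count is at most one more than its end-of-round-$i$ value, so the counts lie in $[i-1,i+1]$ throughout the round and the upper bound $2$ follows. It is attained right after the first game of round $3$: the team whose bye was in round $2$ has just played its second game while the team whose bye was in round $1$ has so far played only one.

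For the rest difference index, which is the most delicate part, I would proceed by enumerating possible rest values. Rests measured against the imaginary pre-tournament game lie in $\{0,1,\dots,k\}$ and occur only at a team's first real game: within round $1$ both participants share a slot and hence the same rest, while at slot $1$ of round $2$ the team coming off the round-$1$ bye (rest $k$) meets a continuing opponent from slot $2$ of round $1$ (rest $k-2$), giving difference only $2$. Rests between two consecutive played rounds all lie in $\{k-2,k-1,k\}$, so any pairing of two such continuing teams differs by at most $2$. The extremal rest $2k-1$ arises only at slot $1$ of a round $b+1$ with $2\le b\le n-1$, for the team that had its bye in round $b$; by the cycle structure its opponent, occupying position $(\mathrm{bottom},2)$ in round $b+1$, had its previous game at slot $2$ of round $b$ and hence rest $k-2$, producing the pairing difference $k+1$.

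The main obstacle is precisely this upper bound for the rest difference index: one must enumerate the rest values carefully and verify that the unique source of the extremal rest $2k-1$ --- a post-bye team at slot $1$ --- is always paired with an opponent of rest exactly $k-2$, so no pairing anywhere can exceed $k+1$.
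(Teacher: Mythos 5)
Your proposal follows essentially the same route as the paper: track each team's slot through the cyclic position sequence of the circle design, note that the slot shifts by at most one between consecutive played rounds, and treat the post-bye team separately; your explicit enumeration of the possible rest values $\{k-2,k-1,k\}$ and $2k-1$ for the rest difference index is, if anything, more careful than the paper's own upper-bound argument. The one flaw is your witness for the lower bound of the games-played difference index: the two teams you exhibit after the first game of round $3$ have played $2$ and $1$ games respectively, which only demonstrates a difference of $1$. To witness a difference of $2$ you should instead compare the team whose bye was in round $1$ (which has played exactly one game at that moment) with the \emph{opponent} of the post-bye team in the first game of round $3$; that opponent occupies position $(\mathrm{bottom},2)$ in round $3$, has played in all three rounds so far, and hence has three games. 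With that substitution the argument is complete and correct.
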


\begin{proof}
We verify each of the measures separately.

\begin{itemize}
\item \textit{Guaranteed rest time}: Note that each team plays at most once in every round. Since the slot of a team is shifted by at most $1$ from one round to the next, and each round consists of $k$ slots, the team has a rest of at least $k-2$ games. On the other hand, a team whose slot is shifted to the left has a rest of exactly $k-2$ games.

\item \textit{Games-played difference index}: Since the team that sits out each round is distinct, the difference between the highest and lowest number of games played by a team at the end of each round is $1$. Each team plays at most once in every round, so the difference increases by at most $1$ during a round. Hence the index is at most $2$. 

On the other hand, consider the point after the first game in the third round has just finished. The team that sat out the second round has played once, while a team involved in the first game of the third round has played three times. It follows that the index is $2$.

\item \textit{Rest difference index}: Consider the first slot in the third round. One of the teams in that slot was last involved in the first game of the first round, while the other team played in the second slot of the second round. Hence the index is at least $k+1$. 

On the other hand, consider any two teams involved in a game. If the two teams also played in the previous round, the difference in their rest time is at most $k-1$. Otherwise, one of the team sat out in the previous round. This implies that the team played the first game of the round before the previous round, while the other team played the second game of the previous round. Hence the index is exactly $k+1$.
\end{itemize}

Thus we have verified all of the measures.
\end{proof}

We now show that if a schedule attains the upper bound on the guaranteed rest time, it will also fare optimally with respect to the rest difference index.

\begin{lemma}
\label{lem:oddproperty}
Let $n\geq 3$. Any schedule for a tournament with $n=2k+1$ teams with guaranteed rest time $k-1$ has rest difference index $1$.
\end{lemma}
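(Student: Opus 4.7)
My plan is to first establish a strong structural consequence of having guaranteed rest time $k-1$: in any window of $k+1$ consecutive games, exactly one team plays twice---necessarily at the two endpoints of the window---and every other team plays exactly once. The rest difference index bound will then follow by a short case analysis on the position $t$ of the game under consideration.

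For the structural claim, I observe that a window of $k+1$ consecutive games contains $2k+2$ team-slots while the tournament has only $2k+1$ teams, so pigeonhole forces at least one team to play twice in the window. Since any two appearances of the same team are separated by at least $k-1$ other games, these two appearances must lie at positions exactly $k$ apart, which within a $(k+1)$-window forces them to be the two endpoints. If two distinct teams both played twice, each would play at both endpoints, so the two endpoint games would involve the same pair of teams, contradicting the single round-robin hypothesis. A slot count then forces each of the remaining $2k$ teams to play exactly once in the window.

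To deduce the main claim, I fix a game at position $t$ with participants $A$ and $B$ and compare $i_A$ and $i_B$. When $t \leq k$, the rest-time lower bound rules out any earlier appearance of either team, so each is coming directly off the imaginary game and $i_A = i_B = t - 1$. When $t \geq k+1$, I apply the structural lemma to the window $[t-k,\, t]$: exactly one of the two teams, say $A$, plays twice (at $t-k$ and $t$), giving $i_A = k-1$, while $B$ appears only at position $t$ in this window. For $t = k+1$ this already forces $i_B = k$ via the imaginary game; for $t \geq k+2$, applying the lemma to the window $[t-k-1,\, t-1]$ shows that every team, including $B$, plays somewhere in that window, and since $B$ is excluded from positions $t-k, \ldots, t-1$, the only remaining slot is $t-k-1$, again yielding $i_B = k$. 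Hence $|i_A - i_B| \leq 1$ in every case, and combined with the general lower bound of $1$ noted just after the definition, the rest difference index equals $1$.

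The step I expect to be the main obstacle is the uniqueness half of the structural lemma. Without the single round-robin hypothesis, two distinct teams could in principle occupy both endpoints of a $(k+1)$-window, and the rest of the argument---pinning down exactly where $B$ must have last played---would collapse. Once that uniqueness is secured, everything else reduces to careful bookkeeping against the rest-time lower bound.
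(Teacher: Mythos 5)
Your proof is correct and takes essentially the same route as the paper's: both arguments show that all $2k+1$ teams appear in any $k+1$ consecutive games, that the game $k$ positions back shares exactly one team with the current game (using the single round-robin hypothesis to exclude a repeated pairing), and hence that the other team last played exactly $k+1$ games ago, giving rest times $k-1$ and $k$. Your packaging of this as a window lemma, plus the explicit handling of the boundary cases $t\leq k$ and $t=k+1$ via the imaginary game, is just a more carefully organized version of the paper's argument.
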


\begin{proof}
Suppose that a schedule for a tournament with $2k+1$ teams has guaranteed rest time $k-1$. This means that any $k$ consecutive games in the schedule are played by $2k$ distinct teams. 

We show that the rest difference index is $1$. Consider an arbitrary game after the $k$th game. This game cannot involve a team that played in one of the previous $k-1$ games. Moreover, the game cannot be played between the two teams that played each other $k$ games ago. Hence the only possibility is that the game is played between the team that sat out the previous $k$ games and one of the two teams that played $k$ games ago. In particular, all $2k+1$ teams appear in any block of $k+1$ consecutive games. This implies that the team that sat out the previous $k$ games played $k+1$ games ago (if this game exists). Hence the rest difference index is $1$, as desired.
\end{proof}

\begin{figure}
\centering
\begin{tabular}{*{14}{c}}
1 & 3 && 1 & 2 && 4 & 1 && 2 & 3 && 1 & 2 \\
2 & 4 && 5 & 3 && 5 & 3 && 4 & 5 && 4 & 5 \\ 
\end{tabular}
\caption{The schedule as described in Theorem \ref{thm:oddconstruction} with guaranteed rest time $1$, games-played difference index $1$, and rest difference index $1$ for a tournament with $n=5$.}
\label{fig:example5}
\end{figure}

\begin{figure}
\centering
\begin{tabular}{*{28}{c}}
1 & 3 & 5 && 1 & 2 & 4 && 6 & 1 & 2 && 4 & 3 & 1 && 2 & 4 & 3 && 1 & 2 & 5 && 3 & 1 & 2 \\
2 & 4 & 6 && 7 & 3 & 5 && 7 & 3 & 5 && 6 & 7 & 5 && 6 & 7 & 5 && 6 & 4 & 7 && 6 & 4 & 7 \\ [3mm]
1 & 3 & 5 && 1 & 2 & 4 && 6 & 1 & 2 && 4 & 1 & 3 && 2 & 4 & 1 && 3 & 2 & 1 && 5 & 3 & 2 \\
2 & 4 & 6 && 7 & 3 & 5 && 7 & 3 & 5 && 7 & 6 & 5 && 7 & 6 & 5 && 7 & 6 & 4 && 7 & 6 & 4 \\
\end{tabular}
\caption{Two schedules with guaranteed rest time $2$, games-played difference index $1$, and rest difference index $1$ for a tournament with $n=7$. The first schedule corresponds to the one described in Theorem \ref{thm:oddconstruction}.}
\label{fig:example7}
\end{figure}

Proposition \ref{prop:oddbound} and Lemma \ref{lem:oddproperty} do not carry much meaning on their own. Indeed, without an example to show that the bounds can be achieved, it is difficult to tell how useful the bounds are. In particular, the rest difference index of the schedule generated by the circle design, $k+1$, is quite far off from the bound we have so far, $1$. All of these observations raise the natural question of whether there exist other schedules that perform better on some or all measures. The next theorem gives the most satisfying answer possible to this question. It shows that there exists a schedule that fare optimally---and strictly better than the circle-design schedule---with respect to all three measures.

\begin{theorem}
\label{thm:oddconstruction}
Let $n\geq 3$. There exists a schedule for a tournament with $n=2k+1$ teams with guaranteed rest time $k-1$, games-played difference index $1$, and rest difference index $1$.
\end{theorem}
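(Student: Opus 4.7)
The plan is to reduce via Lemma~\ref{lem:oddproperty} and then build an explicit schedule exploiting the rigid structure it implies. Since any schedule with guaranteed rest time $k-1$ automatically has rest difference index $1$ by that lemma, and $k-1$ is the upper bound from Proposition~\ref{prop:oddbound}, it suffices to produce a single schedule attaining guaranteed rest time $k-1$ and games-played difference index $1$; this would be optimal in all three measures.

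The proof of Lemma~\ref{lem:oddproperty} pins down almost all of the schedule. Guaranteed rest time $k-1$ forces any $k$ consecutive games to involve $2k$ distinct teams, which partitions the $nk$ games into $n$ blocks of $k$ games each, with every block missing one ``sitter.'' Moreover, for every $i > k$, game $i$ must pair the team that sat out games $i-k,\dots,i-1$ with one of the two teams of game $i-k$. Thus, once the first $k$ games are chosen, the remaining schedule is determined by a single binary choice per game (which of the two teams from game $i-k$ ``continues'' into game $i$, and which becomes the new sitter).

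My plan is therefore to specify (i) a sitter sequence $s_1,\dots,s_n$ that covers each team exactly once, and (ii) the ordering of the $k$ games inside each block, and then to show that the induced binary choices define a valid schedule. Guided by Figures~\ref{fig:example5} and~\ref{fig:example7}, I would label teams by $\{0,1,\dots,2k\}$ and take the sitter sequence $2k,\,2k-1,\,2k-3,\,\dots,\,3,\,1,\,0,\,2,\,4,\,\dots,\,2k-2$. Within each block, the first game is forced to contain the previous block's sitter (by the rigid transition rule); for games-played difference index $1$, the early games of each block must moreover pair teams that have already sat out in earlier blocks, so that no team's played-count jumps two ahead of the current sitter's. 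The proposed sitter sequence is designed precisely so that this ``early games among previously-seated teams'' requirement meshes with the forced transitions.

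Verification then splits into three parts: (a) each pair of teams plays exactly once, (b) any $k$ consecutive games span $2k$ distinct teams, including across block boundaries, and (c) the games-played difference index equals $1$ at every intermediate slot. Part (b) is automatic from the transition rule. Part (c) reduces to checking that in each block, the first few games pair teams that have already sat out, a middle game (when necessary) pairs a previously-seated team with a fresh one, and the remaining games pair fresh teams; the chosen sitter sequence makes this pattern consistent with the binary choices forced by the transitions. I expect part (a) to be the main obstacle: because the schedule is completely rigid once the sitter sequence and the first block are fixed, one must verify that no pair of teams ends up scheduled twice and that all $\binom{n}{2}$ pairs do appear. I would handle this by writing a closed-form expression for the block and slot in which each unordered pair $\{u,v\}$ is scheduled under the construction, and checking directly that this map is a bijection onto the set of unordered pairs.
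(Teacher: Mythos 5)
Your overall strategy is the same as the paper's: invoke Lemma~\ref{lem:oddproperty} to reduce the problem to exhibiting a schedule with guaranteed rest time $k-1$ and games-played difference index $1$, then give an explicit construction. Your structural observations are correct and even somewhat sharper than what the paper states explicitly: guaranteed rest time $k-1$ does force game $i$ (for $i>k$) to pair the unique team absent from games $i-k,\dots,i-1$ with one of the two teams of game $i-k$, so the schedule is rigid up to the first block and one binary choice per game. Your proposed sitter sequence also agrees (after relabelling) with the construction in the paper, whose block-level sitters for $n=7$ are $7,6,4,2,1,3,5$.

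The gap is that the proposal stops exactly where the real work begins. The paper's proof does not merely name a construction; it specifies the slot of every team in every round in closed form and then verifies, by a six-case enumeration, the precise round in which each unordered pair $\{u,v\}$ meets (e.g.\ teams $2i-1$ and $2j-1$ meet in round $i+j$, teams $2i$ and $2j$ in round $2k+3-i-j$, etc.), which is what certifies that the object is a single round-robin at all. You correctly identify this as ``the main obstacle'' and then defer it (``I would handle this by writing a closed-form expression\dots and checking directly that this map is a bijection''). The same deferral occurs for the games-played difference index: you assert that the sitter sequence ``is designed precisely so that'' the early games of each block involve previously seated teams, but you do not prove it, whereas the paper reduces this to the verifiable claim that any team that has already sat out appears in its round no later than any team that has not. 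A further soft spot: your framing treats ``the ordering of the $k$ games inside each block'' as a free design parameter, but under the transition rule the ordering of block $j$ is already determined by block $j-1$ and the binary choices, so only the first block's ordering is genuinely free; this does not break the plan, but it means the consistency you are relying on must be proved rather than arranged. As written, the proposal is a credible plan that matches the paper's route, but it is not yet a proof.
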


\begin{proof}
In light of Lemma \ref{lem:oddproperty}, it suffices to show the existence of a schedule for a tournament with $2k+1$ teams with guaranteed rest time $k-1$ and games-played difference index $1$. We exhibit the schedule by specifying the slot that the teams play in each round. Slots are taken modulo $k+1$, and slot $0$ means that a team sits out that round. The schedule is defined as follows.

\begin{itemize}
\item For $1\leq i\leq k$, team $2i-1$ is placed in slot $i$ in the first $2i$ rounds. After that, the team moves forward by one slot in each round.
\item For $1\leq i\leq k$, team $2i$ is placed in slot $i$ in the first round. The team moves forward by one slot in each round until round $2k+3-2i$. After that, it stays in the same slot until the last round.
\item Team $2k+1$ is placed in slot $\lfloor j/2\rfloor$ in the $j$th round.
\end{itemize}

The resulting schedules for the tournaments with $n=5$ and $n=7$ can be seen in Figures \ref{fig:example5} and \ref{fig:example7}, respectively.

We show that the schedule is well-defined by demonstrating that every pair of teams play each other exactly once. We divide the verification into cases.

\begin{itemize}
\item For $1\leq i\neq j\leq k$, teams $2i-1$ and $2j-1$ play each other in round $i+j$.
\item For $1\leq i\neq j\leq k$, teams $2i$ and $2j$ play each other in round $2k+3-i-j$.
\item For $1\leq i\leq j\leq k$, teams $2i-1$ and $2j$ play each other in round $1$ if $i=j$ and round $2k+2+i-j$ otherwise.
\item For $1\leq i<j\leq k$, teams $2i$ and $2j-1$ play each other in round $j-i+1$.
\item For $1\leq i\leq k$, teams $2i-1$ and $2k+1$ play each other in round $2i$.
\item For $1\leq i\leq k$, teams $2i$ and $2k+1$ play each other in round $2k+2-i$.
\end{itemize}

Next, we show that the guaranteed rest time is $k-1$. If a team sits out a round between two of its games, then the two games are separated by at least $k$ other games. Otherwise, a team either stays in the same slot or moves one slot forward in the next round. In both cases, the team has a rest of at least $k-1$ games in between.

Finally, we show that the games-played difference index is $1$. At the end of each round, the difference between the highest and lowest number of games played by a team is at most $1$. The teams with a lower number of games played are exactly those that already sat out a round. Hence it suffices to show that in any round, a team that already sat out a round appears no later than a team that participated in every round. One can check that team $2k+1$, which sat out the first round, appears no later than any team that did not sit out, and any other team that already sat out appears no later than it. This completes the proof of the claim, and therefore the theorem.
\end{proof}

The schedule described in Theorem \ref{thm:oddconstruction} is not necessarily the unique schedule satisfying the desired properties. Indeed, for $n=6$, another schedule satisfying the desired properties is shown in Figure \ref{fig:example7}. To see that the two schedules cannot be obtained from each other by permuting the team indices, observe that the first two rounds of games uniquely determine the identity of the teams: team 1 plays in games 1 and 4, team 2 plays in games 1 and 5, team 3 plays in games 2 and 5, and so on. Since the two schedules differ in the second game of the fourth round, no permutation of team indices in one schedule results in the other schedule.

Now that Theorem \ref{thm:oddconstruction} gives us a schedule that fare optimally on all three measures, we may demand a stronger notion of fairness. In particular, while the rest difference index of $1$ guarantees that two teams going into a game have roughly the same amount of rest, it seems fairer if all teams sometimes get a longer rest than their opponent and sometimes a shorter one than if some teams always get a longer rest than their opponent. Nevertheless, the following proposition shows that as long as we insist on maximal guaranteed rest time, this goal cannot be achieved.

\begin{proposition}
\label{prop:alwayswin}
Let $n\geq 3$. For any schedule for a tournament with $n=2k+1$ teams with guaranteed rest time $k-1$, there exists a team that has a longer rest time than its opponent in every game after its first game.
\end{proposition}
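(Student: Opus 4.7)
My plan is to reduce the proposition to the existence of a single team all of whose $2k-1$ inter-game gaps have size $k+1$, then exhibit one. First, by Lemma~\ref{lem:oddproperty} the schedule has rest difference index $1$; combined with the guaranteed rest $k-1$ and the pigeonhole observation (used in the proof of that lemma) that every $k+1$ consecutive games involve all $2k+1$ teams, each team's consecutive games are separated by either $k$ or $k+1$ slots, giving rest $k-1$ or $k$ respectively. The key sub-lemma I would prove is that in any game where both teams have played before, one team has gap $k$ and the other gap $k+1$. Indeed, if both teams had the same gap $g$, they were both in the game $g$ slots earlier; since only one game occupies each slot, they were paired against each other then, and playing again now contradicts the single round-robin property. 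Consequently ``strictly longer rest than opponent'' is equivalent to ``arriving at the game with gap $k+1$'', and the proposition reduces to finding a team all of whose $2k-1$ gaps are $k+1$.

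Such a team $X$ satisfies $l_X - f_X = (2k-1)(k+1) = 2k^2+k-1$, which combined with $f_X \geq 1$ and $l_X \leq 2k^2+k$ forces $f_X = 1$ and $l_X = 2k^2+k$, i.e., it plays both the first and the last game of the tournament. To identify a candidate, I would apply pigeonhole to the first $k$ games (which admit at most $2k$ distinct teams) to locate the unique team $Y$ missing from them; then $Y$'s first game is at time $k+1$. Applying the sub-lemma to that game shows $Y$'s opponent, call it $A$, previously played at time $1$, so $A$ is in game $1$. Let $B$ denote the other team in game $1$. Since $B \notin \{Y,A\}$ and thus is not in the game at time $k+1$, and $B$'s second game must lie in $\{k+1, k+2\}$ by the rest bounds, it is forced to be at time $k+2$, giving $B$ a first gap of size $k+1$.

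The main step is to prove by induction on $j$ that $B$'s $j$-th game occurs at time $1+(j-1)(k+1)$ for every $j \leq 2k$. Assuming the hypothesis through step $j_0$, if $B$'s $(j_0+1)$-th game had gap $k$ rather than $k+1$ then by the sub-lemma its opponent would have last played at time $(j_0-1)(k+1)$. The heart of the argument, and what I expect to be the main obstacle, is to pin down this forced opponent using the aligned block structure: divide the schedule into $2k+1$ consecutive blocks of $k$ games, so that each block is missing exactly one team, and each team's slot position shifts by $0$ or $1$ from one block to the next under a ``one-stays, one-moves'' rule derivable from the pigeonhole property. Tracing who occupies the earlier position in question, one shows this opponent must be either $A$ or a team $B$ has already met, yielding a repeated pair and contradicting round-robin. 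Once the induction is complete, $B$'s last game lies at time $2k^2+k$ with every gap equal to $k+1$, so $B$ has rest $k$ and its opponent rest $k-1$ in each of its $2k-1$ non-first games, completing the proof.
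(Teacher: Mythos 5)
Your setup is sound and coincides with the paper's: the sub-lemma that every game (after both participants' first) pairs a gap-$k$ team with a gap-$(k+1)$ team is correct, your candidate $B$ (the team from game $1$ that does not reappear in game $k+1$) is exactly the team the paper singles out, and the reduction to showing all of $B$'s gaps equal $k+1$ is right. But the inductive step you yourself flag as ``the main obstacle'' is genuinely missing, and the dichotomy you propose to close it --- that the forced opponent ``must be either $A$ or a team $B$ has already met'' --- is false. Trace that opponent $t$ backwards: if $B$ first repeats a slot between rounds $i$ and $i+1$, then $t$ occupies the slot immediately before $B$ in round $i$, and (to avoid meeting $B$ twice) the slot immediately before $B$ in each of rounds $i-1,\dots,2$ as well; in round $1$ this forces $t$ into slot $0$, i.e.\ $t$ is the team that sat out the first round. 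That team is neither $A$ nor any previous opponent of $B$, so no repeated pair arises and your intended contradiction evaporates.

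The paper closes this surviving case with a separate argument that your sketch has no analogue of: $t$ lands in slot $1$ of round $2$, where it must play $A$; thereafter $t$ sits exactly one slot ahead of $A$, and the two can never share a slot again (that would be a second meeting), so $A$ can never ``overtake'' $t$; but $t$ has already used its unique bye in round $1$ and never returns to slot $0$, which traps $A$ strictly behind it and prevents $A$ from ever sitting out --- contradicting the fact that in a schedule with guaranteed rest time $k-1$ each of the $2k+1$ teams sits out exactly one of the $2k+1$ rounds. Without this step (or some substitute for it) your induction does not go through, so the proposal as written is incomplete.
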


\begin{proof}
Consider a schedule for a tournament with $n=2k+1$ teams with guaranteed rest time $k-1$. As in the proof of Lemma \ref{lem:oddproperty}, we find that any game after the $k$th game is played between the unique team that sat out the previous $k$ games and one of the two teams that played $k$ games ago. This implies that if a team just played a game and still has more games left in the tournament, then it will have a rest of either $k-1$ or $k$ games before its next game. Put differently using the terminology in the proof of Theorem \ref{thm:oddconstruction}, a team either stays in the same slot or moves one slot forward in the next round. Since the number of rounds, $2k+1$, is equal to the number of teams, each team sits out exactly one round.

Suppose that teams 1 and 2 play each other in the first game of the tournament, and team 2 has a rest of $k$ games before its next game. We claim that team 2 moves one slot forward in every round. This claim suffices to prove the theorem, since it implies that team 2 has a longer rest time than its opponent in every game after its first game.

Assume for the sake of contradiction that team 2 stays in the same slot at some point during the tournament. Consider the first instance in which this occurs. Since every team sits out exactly one slot, the slot is not slot $0$. 

Suppose that team 2 repeats a slot in rounds $i$ and $i+1$. This means that the team that plays against team 2 in round $i+1$ (say, team $t$) played in the slot before team 2 in round $i$. Since the two teams play each other only once during the tournament, team $t$ also played in the slot before team 2 in round $i-1$, round $i-2$, and so on down to round $2$. Hence team $t$ sat out the first round, played against team 1 in the first slot of the second round, and is in the slot ahead of team 1 in the third round. This implies that team 1 cannot ``overtake'' team $t$ in the slot position for the rest of the tournament. But since team $t$ already sat out while team 1 did not, this means team 1 cannot sit out for the rest of the tournament, a contradiction.
\end{proof}


\section{Discussion}
\label{sec:discussion}

In this paper, we defined three measures that capture quality and fairness properties of a schedule for an asynchronous round-robin tournament, and we also exhibited schedules that perform (close to) optimally with respect to all of these measures. Here we give some comments and directions for future work.

Several of our results can be generalized to arbitrary round-robin tournaments in which every pair of teams play each other a fixed number of times. Indeed, we can turn a single round-robin tournament into an arbitrary round-robin tournament by duplicating each round a desired number of times. This method preserves the guaranteed rest time and rest difference index, and for Proposition \ref{prop:evencircle} it also preserves the games-played difference index. Moreover, Propositions \ref{prop:evenresttime} and \ref{prop:oddbound} can be generalized to arbitrary round-robin tournaments as well.

As mentioned in Section \ref{sec:evennum}, an interesting open question is whether there exists a schedule with rest difference index $1$ when there are an even number of teams. Such schedules are shown in Figure \ref{fig:restdifference6} for the case $n=6$. If the answer turns out to be affirmative, one could also ask for a schedule with a ``balanced'' rest difference in the sense described before Proposition \ref{prop:alwayswin}, i.e., teams sometimes get a longer rest than their opponent and sometimes a shorter one. In addition, one could ask for the optimal value of one measure when the remaining two are forced to achieve their optimal values. From Proposition \ref{prop:evencircle} and Theorem \ref{thm:evennooptimal}, we know that when the guaranteed rest time and the games-played difference index achieve their optimal values, the minimum rest difference index is 2. When the number of teams is odd, it would be interesting to explore whether it is possible to achieve a rest difference index of $1$ with a balanced rest difference if we are willing to sacrifice on other measures.

Finally, it might be worth investigating the structure of ``optimal'' schedules: how many there are, and whether they differ between themselves in some other meaningful way for the teams. This could potentially yield new insights into the fascinating study of scheduling round-robin tournaments.

\subsection*{Acknowledgments.} 

The author thanks the anonymous reviewers for their helpful feedback and acknowledges support from a Stanford Graduate Fellowship.



\begin{thebibliography}{99}

\bibitem{Anderson97} Anderson, I., 1997. Combinatorial Designs and Tournaments. Oxford University Press.



\bibitem{BlestFi88} Blest, D. C., Fitzgerald, D. G., 1988. Scheduling sports competitions with a given distribution of times. Discrete Applied Mathematics 22(1), 9--19.

\bibitem{Briskorn09} Briskorn, D., 2009. Combinatorial properties of strength groups in round robin tournaments. European Journal of Operational Research 192(3), 744--754.

\bibitem{BriskornKn10} Briskorn, D., Knust, S., 2010. Constructing fair sports league schedules with regard to strength groups. Discrete Applied Mathematics 158(2): 123--135.



\bibitem{HaselgroveLe77} Haselgrove, J., Leech, J., 1977. A tournament design problem. American Mathematical Monthly 84(3), 198--201.

\bibitem{KimSuVa16} Kim, M. P., Suksompong, W., Vassilevska Williams, V., 2017. Who Can Win a Single-Elimination Tournament? SIAM Journal on Discrete Mathematics 31(3), 1751--1764.

\bibitem{Knust08} Knust, S., 2008. Scheduling sports tournaments on a single court minimizing waiting times. Operations Research Letters 36(4), 471--476.

\bibitem{Knust10} Knust, S., 2010. Scheduling non-professional table-tennis leagues. European Journal of Operational Research 200(2), 358--367.




\bibitem{RasmussenTr08} Rasmussen, R. V., Trick, M. A., 2008. Round robin scheduling -- a survey. European Journal of Operational Research 188, 617--636.


\bibitem{SchellenbergVa77} Schellenberg, P. J., Van Rees, G. H. J., Vanstone, S. A., 1977. The existence of balanced tournament designs. Ars Combinatoria 3, 303--318.

\bibitem{SchonbergerMaKo04} Sch\"{o}enberger, J., Mattfield, D. C., Kopfer, H., 2004. Memetic algorithm timetabling for non-commercial sport leagues. European Journal of Operational Research 153(1), 102--116.


\bibitem{VanthofPoBr10} van 't Hof, P., Post, G., Briskorn, D., 2010. Constructing fair round robin tournaments with a minimum number of breaks. Operations Research Letters 38(6), 592--596.

\bibitem{Vassilevska10} Vassilevska Williams, V., 2010. Fixing a tournament. In Proceedings of the Twenty-Fourth AAAI Conference on Artificial Intelligence, 895--900.

\bibitem{ZengMi13} Zeng, L., Mizuno, S., 2013. Constructing fair single round robin tournaments regarding strength groups with a minimum number of breaks. Operations Research Letters 41(5), 506--510.

\end{thebibliography}
\end{document}